\newtheorem{proposition}{Proposition}[section]
\newtheorem{definition}[proposition]{Definition}
\newtheorem{lemma}[proposition]{Lemma}
\newtheorem{corollary}[proposition]{Corollary}
\newtheorem{example}[proposition]{Example}
\numberwithin{equation}{section}
\newcommand{\etal}{\textit{et al.}}
\newcommand{\rank}{\operatorname{rank}}
\newcommand{\submax}{\operatorname{submax}}
\newcommand{\A}{\mathbf A}
\newcommand{\B}{\mathbf B}
\newcommand{\R}{\mathbb R}
\renewcommand{\S}{\mathfrak S}
\newcommand{\T}{\mathcal T}
\newcommand{\D}{\boldsymbol{\mathscr D}}
\newcommand{\I}{\boldsymbol{\mathscr I}}
\newcommand{\X}{\boldsymbol{\mathscr X}}
\newcommand{\Y}{\boldsymbol{\mathscr Y}}
\newcommand{\Z}{\boldsymbol{\mathscr Z}}
\begin{document}

\title{An Axiomatic Approach to Tensor Rank Functions}
\author{Wayne W. Wheeler}
\address{Center for Communications Research, 4320 Westerra Court, San Diego, CA 92121}
\email{wheeler@ccrwest.org}

\begin{abstract}
Recent work of Qi \etal~\cite{Qi20} proposes a set of axioms for tensor
rank functions. The current paper presents examples showing that their axioms
allow rank functions to have some undesirable properties, and a stronger set of
axioms is suggested that eliminates these properties. Two questions raised by
Qi \etal\ involving the submax rank function are also answered.
\end{abstract}

\maketitle

\section{Introduction}

Tensors are multidimensional arrays that provide a natural generalization
of matrices. The theory was originally developed in psychometrics in the work
of authors including Hitchcock~\cite{Hitchcock27a, Hitchcock27b},
Cattell~\cite{Cattell44}, Tucker~\cite{Tucker63, Tucker64, Tucker66}, Carroll
and Chang~\cite{Carroll70}, and Harshman~\cite{Harshman90}.  Tensors have
subsequently proven to be useful in numerous other applications such as
chemometrics~\cite{Appellof81}, signal processing~\cite{DeLathauwer98},
numerical analysis~\cite{Beylkin05, Ibraghimov02}, computer
vision~\cite{Vasilescu02}, neuroscience~\cite{Acar07a, Acar07b}, and graph
analytics~\cite{Bader07a, Kolda05}. The concept of the {\em canonical
polyadic rank\/} of a tensor, first proposed by
Hitchcock~\cite{Hitchcock27a, Hitchcock27b} in 1927, is of fundamental
importance since many applications involve approximating a tensor by another
tensor of low rank.

Recent work of Qi \etal~\cite{Qi20} uses an axiomatic approach to study a more
general notion of tensor rank. The authors propose a set of axioms for a tensor
rank function, define a partial order on the class of all such functions, and
show that there is a unique minimum rank function under this partial order.
They then consider some specific rank functions, one of which we call the
{\em submax rank.} They propose this function as a candidate for the minimum
rank function satisfying their axioms.

The current paper continues the axiomatic approach. After reviewing terminology
and fixing notation in Section~\ref{sec:background}, Section~\ref{sec:QZCrank}
studies the set of functions defined by the axioms of Qi \etal~\cite{Qi20},
which we call {\em QZC rank functions.} This section provides answers to two
questions about submax rank raised by Qi \etal\ In particular,
Proposition~\ref{prop:mincond} leads to an example showing that the submax
rank is not the minimum QZC rank function. The second question is related to
the property that any matrix of rank $R$ contains an $R\times R$ submatrix of
rank $R$. Qi \etal\ consider a similar but somewhat weaker property of some
QZC rank functions, and Corollary~\ref{cor:tucker} shows that the submax rank
does have this property.

Section~\ref{sec:QZCrank} also shows that QZC rank functions can have some
properties that seem quite undesirable. For example, it is possible for a QZC
rank function $r$ to satisfy $r(\D) < D$ when $\D$ is a diagonal tensor with
$D$ nonzero entries on the diagonal. In addition, Section~\ref{sec:QZCrank}
gives an example of a QZC rank function $r$ and two tensors $\X$ and $\Y$ such
that $\Y$ is obtained from $\X$ simply by appending a slab of zeros, but
$r(\Y) > r(\X)$. To eliminate these sorts of examples, a different set of
axioms for tensor rank functions is proposed in Section~\ref{sec:rank}. All
tensor rank functions satisfying these axioms are QZC rank functions, but they
do not have the same sort of pathological behavior.

\section{Background and Notation}
\label{sec:background}

This section describes the notation and terminology used in the remainder
of the paper. The notation is generally intended to conform to that used by
Kolda and Bader~\cite{Kolda09} or to Qi \etal~\cite{Qi20}. For simplicity all
tensors considered in this paper will have entries in $\R$.

The {\em order\/} of a tensor is the number of dimensions, which are also
called {\em modes\/} or {\em ways.} Vectors are simply tensors of order one
and are written as boldface lowercase letters such as $\mathbf a$; matrices
are tensors of order two and are written as boldface capital letters such as
$\A$; tensors of higher order or of unspecified order are written as boldface
Euler script letters such as $\X$. The $i^\text{th}$ entry of a vector
$\mathbf a$ is denoted by $a_i$, the $(i,j)$ entry of a matrix $\A$ is denoted
by $a_{ij}$, and the $(i_1,\dots,i_N)$ entry of a tensor $\X$ of order $N$
is denoted by $x_{i_1\cdots i_N}$.  It will be convenient to write $\T$ for the
collection of all tensors over $\R$. The notation $\S_N$ will denote the
symmetric group of all permutations of the set $\{1,\dots,N\}$.

When considering a tensor $\X$ of order $N$, we generally assume that the
$n^\text{th}$ index ranges from~$1$ to~$I_n$. In this case we write
$\X\in\R^{I_1\times\cdots\times I_N}$. If $N > 1$ and $I_n = 1$ for some $n$, then
$\X$ has an associated tensor of order $N-1$ obtained by eliminating the index
corresponding to the $n^\text{th}$ mode of $\X$.

Let $\X\in\R^{I_1\times\cdots\times I_N}$ be a tensor. For $1\leq n\leq N$ suppose
that $1\leq J_n\leq I_n$ and $1\leq i_{n1}<\cdots<i_{nJ_n}\leq I_n$.  Then the
tensor $\Y\in\R^{J_1\times\cdots\times J_N}$ given by 
\[ y_{j_1\cdots j_N} = x_{i_{1j_1}\cdots i_{Nj_N}} \]
is called a {\em subtensor\/} of $\X$. When $J_n = 1$ for all but one or two of
the dimensions $n$ with $1\leq n\leq N$, the subtensor $\Y$ can be identified
with either a vector $\mathbf y$ or a matrix $\mathbf Y$. In these cases it
will be convenient to refer to either $\mathbf y$ or $\mathbf Y$ as a
{\em submatrix\/} of $\Y$, even though $\Y$ may not itself be a matrix.

Let $\mathbf a^{(n)}\in\R^{I_n}$ be a nonzero vector for $1\leq n\leq N$. Let
$\X\in\R^{I_1\times\cdots\times I_N}$ be the tensor given by
\[ x_{i_1\cdots i_N} = a^{(1)}_{i_1}\cdots a^{(N)}_{i_N}. \]
If $\circ$ denotes the outer product of vectors, then this tensor is often
written as
\[ \X = \mathbf a^{(1)}\circ\cdots\circ\mathbf a^{(N)}. \]
A tensor of this form is said to have {\em rank one.}

The idea of expressing a tensor as the sum of a finite number of rank-one
tensors is originally due to Hitchcock~\cite{Hitchcock27a, Hitchcock27b}.
He proposed defining the rank of a tensor $\X$ to be the minimum number of
rank-one tensors having $\X$ as their sum. The notion was not widely studied,
however, until Kruskal~\cite{Kruskal77} proposed the definition independently
in 1977. This idea is now the most commonly used definition of tensor rank and
is often called the {\em canonical polyadic rank\/} or {\em CP rank.} It is
known that the CP rank of a tensor depends upon the base field. An
example of a class of tensors in $\R^{2\times 2\times 2}$ that have CP rank~3 as
real tensors but CP rank~2 as complex tensors appears in work of de~Silva and
Lim~\cite[Section~7.4]{DeSilva08}.

A tensor $\D\in\R^{I_1\times\cdots\times I_N}$ is said to be {\em diagonal\/}
if $i_1 = \cdots = i_N$ whenever $d_{i_1\cdots i_N}\neq 0$. The entries for which
$i_1 = \cdots = i_N$ form the {\em diagonal,} which some authors call the
{\em superdiagonal.} One important example of a diagonal tensor is the analogue
of the $M\times M$ identity matrix $\mathbf I_M$. The {\em identity tensor\/}
$\I_{M, N}$ is defined to be the $N$-dimensional diagonal tensor with
$I_1 = \cdots = I_N = M$ and ones along the diagonal. The notation $\I_{M,N}$
differs from that used by Qi \etal~\cite{Qi20}; in their notation the order of
the two subscripts is interchanged.

It is often useful to rearrange the entries of a tensor $\X$ into a matrix.
This process, known as {\em unfolding\/} or {\em matricization,\/} is discussed
in some detail by Kolda~\cite{Kolda06}. Only the special case of mode-$n$
unfolding will be needed in this paper. The mode-$n$ unfolding of a tensor
$\X\in\R^{I_1\times\cdots\times I_N}$ is a matrix
$\mathbf X_{(n)}\in\R^{I_n\times J_n}$, where
$J_n = I_1\cdots I_{n-1}I_{n+1}\cdots I_N$. The element in position
$(i_1,\dots,i_N)$ of the tensor $\X$ is mapped to the element in position
$(i_n, j)$ of the matrix $\mathbf X_{(n)}$, where
\[ j = 1 + \sum_{k\neq n}(i_k-1)J_k \]
and
\[ J_k = \prod_{\substack{1\leq m<k\\ m\neq n}}I_m. \]
The important fact about the mode-$n$ unfolding $\mathbf X_{(n)}$ is that
its columns are precisely the vectors obtained as subtensors of $\X$ by fixing
every coordinate except the one in mode~$n$. Some authors such as De~Lathauwer
\etal~\cite{DeLathauwer00} use a different ordering for the columns of
$\mathbf X_{(n)}$.

The {\em mode-$n$ matrix product\/} of a tensor
$\X\in\R^{I_1\times\cdots\times I_N}$ with a matrix $\A\in\R^{J\times I_n}$ is the
tensor $\X\times_n\mathbf A\in
\R^{I_1\times\cdots\times I_{n-1}\times J\times I_{n+1}\times\cdots\times I_N}$ given by
\[
(\X\times_n\mathbf A)_{i_1\cdots i_{n-1}ji_{n+1}\cdots i_N}
  = \sum_{i_n=1}^{I_n}x_{i_1\cdots i_N}a_{ji_n}.
\]
It is easy to see that if $m\neq n$, then
\[ \X\times_m\B\times_n\A = \X\times_n\A\times_m\B{\rm;} \]
if $m=n$, then
\[ \X\times_n\B\times_n\A = \X\times_n(\A\B). \]
Suppose that $\A^{(n)}\in\R^{J_n\times I_n}$ for $1\leq n\leq N$. Then the
{\em Tucker operator}~\cite{Kolda06} is defined by
\[
\llbracket\X;\A^{(1)},\dots,\A^{(N)}\rrbracket
  = \X\times_1\A^{(1)}\times_2\cdots\times_N\A^{(N)}.
\]
The following result gives a useful relationship between tensor unfoldings and
the Tucker operator.

\begin{proposition}[{\cite[Proposition~3.7(c)]{Kolda06}}]
\label{prop:unfold}
Suppose that $\X\in\R^{I_1\times\cdots\times I_N}$ and
$\A^{(n)}\in\R^{J_n\times I_n}$ for $1\leq n\leq N$. Then the following conditions
are equivalent:
\begin{enumerate}
\item $\Y = \llbracket\X;\A^{(1)},\dots,\A^{(N)}\rrbracket${\rm;}
\item $\mathbf Y_{(n)} = \A^{(n)}\mathbf X_{(n)}\bigl(\A^{(N)}\otimes\cdots\otimes
\A^{(n+1)}\otimes\A^{(n-1)}\otimes\cdots\otimes\A^{(1)}\bigr)^{\mathsf T}$ for every
$n$ with $1\leq n\leq N${\rm;}
\item $\mathbf Y_{(n)} = \A^{(n)}\mathbf X_{(n)}\bigl(\A^{(N)}\otimes\cdots\otimes
\A^{(n+1)}\otimes\A^{(n-1)}\otimes\cdots\otimes\A^{(1)}\bigr)^{\mathsf T}$ for some
$n$ with $1\leq n\leq N$.
\end{enumerate}
\end{proposition}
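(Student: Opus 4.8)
The plan is to reduce the statement to two single-mode identities for unfoldings and then combine them via the mixed-product property of the Kronecker product. The only substantive content is the implication (1) $\Rightarrow$ (2). The implication (2) $\Rightarrow$ (3) is immediate, and (3) $\Rightarrow$ (1) follows because the mode-$n$ unfolding is a bijection between tensors in $\R^{I_1\times\cdots\times I_N}$ and matrices of the appropriate size: if the displayed formula holds for a particular $n$, then $\mathbf Y_{(n)}$ agrees with the mode-$n$ unfolding of the Tucker operator (which satisfies that formula by (1) $\Rightarrow$ (2)), so $\Y$ must equal the Tucker operator.

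First I would record how a single mode-$m$ product interacts with the mode-$n$ unfolding. For $m = n$, the columns of $\mathbf X_{(n)}$ are exactly the mode-$n$ fibers of $\X$, so the definition of the mode-$n$ product gives $(\X\times_n\A)_{(n)} = \A\,\mathbf X_{(n)}$ directly. For $m\neq n$, the product leaves the mode-$n$ fibers intact and only recombines the column index $j$; tracking the formula $j = 1 + \sum_{k\neq n}(i_k-1)J_k$ yields
\[ (\X\times_m\A)_{(n)} = \mathbf X_{(n)}\bigl(\mathbf I\otimes\cdots\otimes\A\otimes\cdots\otimes\mathbf I\bigr)^{\mathsf T}, \]
where $\A$ occupies the Kronecker factor corresponding to mode $m$ and every other factor is an identity matrix of the appropriate size.

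Next I would assemble the Tucker operator. Using the commutativity $\X\times_m\B\times_n\A = \X\times_n\A\times_m\B$ for $m\neq n$ recorded in the excerpt, I can write $\Y = \llbracket\X;\A^{(1)},\dots,\A^{(N)}\rrbracket$ with the mode-$n$ factor applied first and the remaining factors applied afterward. Taking the mode-$n$ unfolding and applying the two single-mode identities repeatedly then produces $\A^{(n)}$ on the left and, on the right, a product of Kronecker factors, each carrying a single $\A^{(m)}$ with identities elsewhere. The mixed-product property $(\mathbf C_1\otimes\mathbf D_1)(\mathbf C_2\otimes\mathbf D_2) = (\mathbf C_1\mathbf C_2)\otimes(\mathbf D_1\mathbf D_2)$ collapses this product into the single Kronecker factor $\A^{(N)}\otimes\cdots\otimes\A^{(n+1)}\otimes\A^{(n-1)}\otimes\cdots\otimes\A^{(1)}$, which is the formula in~(2).

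The main obstacle is the index bookkeeping in the single-mode identity for $m\neq n$: one must verify that a mode-$m$ product corresponds exactly to right-multiplication by the claimed Kronecker factor, and in particular that the resulting order of the factors (reversed, with mode $n$ omitted) matches the column-ordering convention fixed by the formula for $j$. Once this ordering is pinned down, the mixed-product property finishes the computation, and the three equivalences follow as above.
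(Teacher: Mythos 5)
The paper does not prove this proposition at all: it is quoted verbatim from Kolda \cite[Proposition~3.7(c)]{Kolda06}, so there is no in-paper argument to compare against. On its own merits your proposal is correct and is essentially the standard proof found in that reference: the two single-mode identities $(\X\times_n\A)_{(n)} = \A\mathbf X_{(n)}$ and, for $m\neq n$, right-multiplication of $\mathbf X_{(n)}$ by the transposed Kronecker factor carrying $\A$ in the mode-$m$ slot, combined via the mixed-product property, with (2)$\Rightarrow$(3) trivial and (3)$\Rightarrow$(1) by injectivity of the mode-$n$ unfolding on tensors of a fixed shape (a point worth stating explicitly, since a matrix of a given size can be the unfolding of tensors of several shapes; here the shape $J_1\times\cdots\times J_N$ is fixed by the statement, so the argument is sound). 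Two pieces of bookkeeping you correctly flag but should carry out if this were written in full: first, the $m\neq n$ identity really does require checking that the column index formula $j = 1 + \sum_{k\neq n}(i_k-1)J_k$ produces the \emph{reversed} factor ordering $\A^{(N)}\otimes\cdots\otimes\A^{(1)}$ (with mode $n$ omitted), since other column-ordering conventions, such as De~Lathauwer's, yield a different ordering; second, when you chain the single-mode identities, each successive Kronecker factor must use identity matrices of the \emph{updated} sizes ($J_m$ in modes already transformed, $I_m$ in modes not yet transformed), and the transposes reverse the multiplication order, so the product collapsing under the mixed-product rule is $\mathbf K_r\cdots\mathbf K_1$ inside a single transpose; both details work out, and commutativity of mode products in distinct modes makes the order of application immaterial. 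No gaps beyond these deferred verifications.
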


\section{QZC rank functions and the submax rank}
\label{sec:QZCrank}

This section is devoted to studying the class of rank functions defined by
the axioms of Qi \etal~\cite{Qi20}, which we call {\em QZC rank functions\/}
after the authors, Qi, Zhang, and Chen.

\begin{definition}
\rm A function $r:\T\to\mathbb N\cup\{0\}$ will be called a {\em QZC rank
function\/} if it satisfies the following axioms:
\begin{enumerate}
\item[\rm(QZC1)] $r(\X) = 0$ if and only if $\X = \mathbf 0$, and $r(\X) = 1$
if and only if $\X$ is a rank-one tensor.
\item[\rm(QZC2)] If $N\geq 2$, then $r(\I_{M,N}) = M$.
\item[\rm(QZC3)] If $\X\in\R^{I_1\times I_2\times1\times\cdots\times1}$, then $r(\X)$
is equal to the matrix rank of the $I_1\times I_2$ matrix corresponding to $\X$.
\item[\rm(QZC4)] $r(\X) = r(\alpha\X)$ for all $\alpha\in\R-\{0\}$.
\item[\rm(QZC5)] Suppose that $\X\in\R^{I_1\times\cdots\times I_N}$ and
$\pi\in\S_N$. Then the tensor $\Y$ given by
$y_{i_1\cdots i_N} = x_{i_{\pi(1)}\cdots i_{\pi(N)}}$ satisfies $r(\Y) = r(\X)$.
\item[\rm(QZC6)] If $\Y$ is a subtensor of $\X$, then $r(\Y)\leq r(\X)$.
\end{enumerate}
\end{definition}

\begin{proposition}[{\cite[Theorem~2.2]{Qi20}}]
Let $r_1$ and $r_2$ be QZC rank functions. Then the functions
$r,R:\T\to\mathbb N\cup\{0\}$ given by
\[ r(\X) = \min\{r_1(\X),r_2(\X)\} \]
and
\[ R(\X) = \max\{r_1(\X),r_2(\X)\} \]
are QZC rank functions.
\end{proposition}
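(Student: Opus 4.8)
The plan is to verify directly that both $r$ and $R$ satisfy each of the six axioms (QZC1)--(QZC6), drawing in each case on the corresponding property of $r_1$ and $r_2$. The guiding observation is that forming a pointwise minimum or maximum preserves essentially all of the relevant structure: axioms asserting a specific value, namely (QZC2) and (QZC3), or a pointwise invariance, namely (QZC4) and (QZC5), transfer immediately because $r_1$ and $r_2$ satisfy them individually.

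First I would dispose of these easy axioms. For (QZC2) and (QZC3), since $r_1$ and $r_2$ both equal the prescribed value ($M$ and the matrix rank, respectively), the minimum and maximum agree with that common value. For (QZC4), the identities $r_1(\alpha\X) = r_1(\X)$ and $r_2(\alpha\X) = r_2(\X)$ for $\alpha\in\R-\{0\}$ give $\min\{r_1(\alpha\X),r_2(\alpha\X)\} = \min\{r_1(\X),r_2(\X)\}$, and likewise for the maximum; (QZC5) follows in exactly the same way from the permutation invariance of each $r_i$.

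Next I would treat (QZC1), which needs slightly more care because it is stated as a pair of equivalences. For the minimum, $r(\X) = 0$ forces at least one of $r_1(\X),r_2(\X)$ to vanish, so $\X = \mathbf 0$ by (QZC1) for that function, while conversely $\X = \mathbf 0$ makes both values $0$. For the rank-one characterization, note that $\min\{r_1(\X),r_2(\X)\} = 1$ exactly when both values are at least $1$, so $\X\neq\mathbf 0$, and at least one equals $1$; applying (QZC1) to that function identifies $\X$ as rank-one, and the converse is immediate. The argument for $R$ is dual: $R(\X) = 0$ forces both values to vanish, and $R(\X) = 1$ forces both values to lie in $\{0,1\}$ with at least one equal to $1$.

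Finally, (QZC6) reduces to the general fact that the pointwise minimum and maximum of two monotone functions are monotone. If $\Y$ is a subtensor of $\X$, then $r_1(\Y)\leq r_1(\X)$ and $r_2(\Y)\leq r_2(\X)$. For the minimum, taking the index $i$ that achieves $\min\{r_1(\X),r_2(\X)\}$ gives $\min\{r_1(\Y),r_2(\Y)\}\leq r_i(\Y)\leq r_i(\X)$, hence $r(\Y)\leq r(\X)$; for the maximum, taking the index achieving $\max\{r_1(\Y),r_2(\Y)\}$ yields $R(\Y)\leq R(\X)$ by the symmetric estimate. None of these steps presents a genuine obstacle, so the proof is essentially mechanical; the only point that demands attention is getting the equivalence bookkeeping in (QZC1) correct for both $r$ and $R$.
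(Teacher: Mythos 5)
Your proof is correct: the paper states this proposition without proof, citing it directly from Qi \etal, so there is no internal argument to compare against, and your axiom-by-axiom verification is exactly the routine argument the result calls for. The two points that genuinely require care --- the equivalence bookkeeping in (QZC1), where for the maximum you correctly observe that $R(\X)=1$ forces some $r_i(\X)=1$ so that (QZC1) for that $r_i$ identifies $\X$ as rank-one, and the index-selection trick for monotonicity in (QZC6) --- are both handled properly.
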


Define a partial ordering $\preceq$ on the collection of all QZC rank
functions by setting $r_1\preceq r_2$ if and only if $r_1(\X)\leq r_2(\X)$ for
every tensor $\X$. The maximum of two QZC rank functions will not be used in
this work, but the minimum is interesting because of the following result.

\begin{proposition}[{\cite[Theorem~2.3]{Qi20}}]
\label{prop:min}
There is a unique minimum QZC rank function $\mu$ given by
\[ \mu(\X) = \min\{r(\X)\mid\text{\rm $r$ is a QZC rank function}\}. \]
\end{proposition}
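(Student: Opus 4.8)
The plan is to establish the result in four steps: show that $\mu$ is well defined, verify that $\mu$ is itself a QZC rank function, observe that $\mu$ is a lower bound for every QZC rank function, and deduce uniqueness from the antisymmetry of $\preceq$. For the first step I would note that the collection of QZC rank functions is nonempty, since the CP rank is readily checked to satisfy (QZC1)--(QZC6). Consequently, for each tensor $\X$ the set $\{r(\X)\mid r\text{ is a QZC rank function}\}$ is a nonempty subset of $\mathbb N\cup\{0\}$, and by the well-ordering principle it has a least element. Thus $\mu(\X)$ is well defined for every $\X\in\T$.

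The bulk of the argument is the second step, verifying that $\mu$ satisfies each axiom. Axioms (QZC1)--(QZC5) should follow almost immediately from the observation that they either pin down a value on which every QZC rank function must agree or are preserved under a pointwise minimum. For instance, if $\X$ is a rank-one tensor then every QZC rank function assigns it the value $1$, so $\mu(\X)=1$; conversely, $\mu(\X)=1$ forces some QZC rank function $r$ to satisfy $r(\X)=1$, whence $\X$ is rank one by (QZC1) applied to $r$. The zero-tensor case of (QZC1) and the axioms (QZC2) and (QZC3) are handled identically, since the prescribed value ($0$, $M$, or the matrix rank) is common to all QZC rank functions and hence equals the minimum. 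Axioms (QZC4) and (QZC5) pass to $\mu$ because scaling by a nonzero $\alpha$ and permuting modes leave the value of every QZC rank function unchanged, so they leave the minimum unchanged.

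The one axiom demanding a genuine argument is (QZC6), and this is where I expect the only real subtlety to lie, since it is an inequality relating two different tensors rather than a statement fixing a value. Suppose $\Y$ is a subtensor of $\X$, and choose a QZC rank function $r_0$ attaining the minimum defining $\mu(\X)$, so that $r_0(\X)=\mu(\X)$. Applying (QZC6) to $r_0$ gives $r_0(\Y)\leq r_0(\X)$, and since $\mu(\Y)\leq r_0(\Y)$ by definition of the minimum, I obtain $\mu(\Y)\leq r_0(\Y)\leq r_0(\X)=\mu(\X)$, as required. The key point is that the minimizer must be selected for the larger tensor $\X$ and then its monotonicity invoked; selecting it for $\Y$ instead would not give the inequality. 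This completes the verification that $\mu$ is a QZC rank function.

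For the final two steps, minimality is immediate: for any QZC rank function $r$ and any tensor $\X$ one has $\mu(\X)\leq r(\X)$ directly from the definition of $\mu$ as a minimum, so $\mu\preceq r$. Uniqueness then follows from antisymmetry of $\preceq$: if $\nu$ were another minimum QZC rank function, then $\nu\preceq\mu$ because $\mu$ is a QZC rank function and $\nu$ is minimal, while $\mu\preceq\nu$ because $\nu$ is a QZC rank function and $\mu$ is minimal, forcing $\mu=\nu$.
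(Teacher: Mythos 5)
Your proposal is correct. The paper itself gives no proof of this statement, citing it as Theorem~2.3 of Qi \emph{et al.}, so there is nothing internal to compare against; your argument is the standard one that the citation stands in for: well-definedness via nonemptiness of the class (CP rank, or for that matter the max Tucker rank mentioned in the paper, works as a witness) and the well-ordering of $\mathbb N\cup\{0\}$, pointwise verification of the axioms, and antisymmetry of $\preceq$ for uniqueness. You correctly isolate the one genuinely nontrivial step, Axiom~(QZC6), where the minimum must be realized by a single rank function $r_0$ at the larger tensor $\X$ before monotonicity of $r_0$ is invoked; this selection is legitimate precisely because the minimum over a set of natural numbers is attained, and your remark that choosing a minimizer at $\Y$ instead would fail is exactly the right caution.
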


Suppose that $\X\in\R^{I_1\times\cdots\times I_N}$. For $1\leq n\leq N$ let
$R_n$ denote the rank of the mode-$n$ unfolding $\mathbf X_{(n)}$ of $\X$. The
$N$-tuple $(R_1,\dots,R_N)$ is a special case of the multiplex rank introduced
by Hitchcock~\cite{Hitchcock27b}; it is sometimes called the {\em multilinear
rank}~\cite{DeSilva08} of $\X$. Qi \etal~\cite{Qi20} show that the function
\begin{equation}
\label{eq:max}
r(\X) = \max\{R_1,\dots,R_N\}
\end{equation}
is a QZC rank function. In addition, they define $\submax\{R_1,\dots,R_N\}$ to
be the second largest value of the multiset $\{R_1,\dots,R_N\}$ if $N > 1$ and
$\submax\{R_1\} = R_1$ if $N=1$; for example,
\[ \submax\{1, 2, 3, 3\} = 3. \]
They then show that the function
\begin{equation}
\label{eq:submax}
r(\X) = \submax\{R_1,\dots,R_N\}
\end{equation}
is also a QZC rank function. Qi \etal\ call the function defined by
Equation~(\ref{eq:max}) the {\em max Tucker rank\/} and the function defined
by Equation~(\ref{eq:submax}) the {\em submax Tucker rank}; for simplicity
we refer to them as the {\em max rank\/} and the {\em submax rank\/}.

If $\mathbf X\in\R^{I_1\times I_2}$ is a nonzero matrix of rank $R$, then
$\mathbf X$ has an $R\times R$ submatrix of rank $R$. Unfortunately, the
analogous property is not always satisfied for tensor rank functions. For
example, let $r$ be the max rank, and consider a tensor
$\X\in\R^{I_1\times I_2\times I_3}$ with $r(\X) = R$. It is quite easy to construct
examples in which $R > I_3$ so that $\X$ can have no subtensor
$\Y\in\R^{R\times R\times R}$, let alone one with $r(\Y) = R$. But some QZC rank
functions $r$ do have the weaker property that a nonzero tensor $\X$ always has
a subtensor $\Y\in\R^{J_1\times\cdots\times J_N}$ such that
\[ r(\X) = r(\Y) = J_n \]
for some $n$ with $1\leq n\leq N$. In fact, studying QZC rank functions with
this property is one of the main motivations for the work of Qi
\etal~\cite{Qi20}. They show that the max rank has this property and ask
whether the submax rank does. The following lemma will lead to an answer to
this question in Corollary~\ref{cor:tucker}.

\begin{lemma}
\label{lem:rank}
Let $\X\in\R^{I_1\times\cdots\times I_N}$ be a nonzero tensor, and set
$R = \rank\bigl(\mathbf X_{(n)}\bigr)$ for some $n$ with $1\leq n\leq N$. Let
$j_1 < \cdots < j_R$ be the indices of $R$ linearly independent rows of
$\mathbf X_{(n)}$, and define $\A\in\R^{R\times I_n}$ by
\[
a_{st} = \begin{cases}
        1 & \text{if $t=j_s$}, \\
        0 & \text{otherwise.}
        \end{cases}
\]
Set $\Y = \X\times_n\A$. Then $\Y$ is a subtensor of $\X$ with
$\rank\bigl(\mathbf Y_{(m)}\bigr) = \rank\bigl(\mathbf X_{(m)}\bigr)$ for
$1\leq m\leq N$.
\end{lemma}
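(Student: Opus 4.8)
The plan is to treat the subtensor claim and the rank equalities separately: I would first identify $\Y$ explicitly as a subtensor, then read off its unfoldings, and finally reduce the rank equalities to a statement about linear dependence among fibers.

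First I would expand $\Y = \X\times_n\A$ from the definition of the mode-$n$ matrix product. Since $a_{st} = 1$ exactly when $t = j_s$ and vanishes otherwise, the defining sum collapses to $y_{i_1\cdots i_{n-1}si_{n+1}\cdots i_N} = x_{i_1\cdots i_{n-1}j_si_{n+1}\cdots i_N}$. Comparing this with the definition of a subtensor, it exhibits $\Y$ as the subtensor of $\X$ with $J_n = R$ and $J_m = I_m$ for $m\neq n$, obtained by retaining only the mode-$n$ slices indexed by $j_1,\dots,j_R$ and keeping every other coordinate. This settles the first assertion.

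I would then read off each unfolding from this subtensor structure (equivalently, by applying Proposition~\ref{prop:unfold} to $\Y = \llbracket\X;\A^{(1)},\dots,\A^{(N)}\rrbracket$ with $\A^{(n)} = \A$ and $\A^{(k)} = \mathbf I_{I_k}$ for $k\neq n$). For $m = n$ the matrix $\mathbf Y_{(n)}$ is precisely the submatrix of $\mathbf X_{(n)}$ formed by rows $j_1,\dots,j_R$; these are linearly independent by hypothesis, so $\rank(\mathbf Y_{(n)}) = R = \rank(\mathbf X_{(n)})$. For $m\neq n$ the columns of $\mathbf Y_{(m)}$ are the mode-$m$ fibers of $\Y$, which are exactly those mode-$m$ fibers of $\X$ whose mode-$n$ coordinate lies in $\{j_1,\dots,j_R\}$; up to column order $\mathbf Y_{(m)}$ is thus a column-submatrix of $\mathbf X_{(m)}$, giving $\rank(\mathbf Y_{(m)})\leq\rank(\mathbf X_{(m)})$ immediately.

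The main obstacle is the reverse inequality for $m\neq n$: I must show that the retained fibers still span the whole column space of $\mathbf X_{(m)}$. The key step is to transfer linear dependence from mode $n$ to mode $m$. Because rows $j_1,\dots,j_R$ span the row space of $\mathbf X_{(n)}$, for every mode-$n$ coordinate $i_n$ there are scalars $c_{i_n1},\dots,c_{i_nR}$, independent of the remaining coordinates, with $x_{i_1\cdots i_{n-1}i_ni_{n+1}\cdots i_N} = \sum_{s=1}^R c_{i_ns}\,x_{i_1\cdots i_{n-1}j_si_{n+1}\cdots i_N}$ for all choices of the other indices. Holding fixed every coordinate except the mode-$m$ one, this identity says that the mode-$m$ fiber of $\X$ lying in slice $i_n$ is the same linear combination of the fibers lying in slices $j_1,\dots,j_R$. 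Hence every column of $\mathbf X_{(m)}$ lies in the span of the columns retained in $\mathbf Y_{(m)}$, so the two column spaces coincide and $\rank(\mathbf Y_{(m)}) = \rank(\mathbf X_{(m)})$, completing the argument.
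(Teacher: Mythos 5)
Your proof is correct, and it takes a more elementary route than the paper's. The paper reduces to $n=1$, observes that the selected rows give $\rank\bigl(\mathbf Y_{(1)}\bigr) = R$, and then constructs a matrix $\B\in\R^{I_1\times R}$ with $\B\A\mathbf X_{(1)} = \mathbf X_{(1)}$, so that $\X = \Y\times_1\B$; Proposition~\ref{prop:unfold} then expresses each $\mathbf X_{(m)}$ and $\mathbf Y_{(m)}$ as the other multiplied on the right by a Kronecker-product matrix, whence they share a column space. You instead carry out the same underlying idea at the level of entries: your coefficients $c_{i_ns}$, which exist because rows $j_1,\dots,j_R$ span the row space of $\mathbf X_{(n)}$ and which depend only on $i_n$, are precisely the entries of the paper's matrix $\B$, and your fiber-transfer identity is the entrywise content of $\X = \Y\times_n\B$ together with the observation that the columns of $\mathbf X_{(m)}$ are the mode-$m$ fibers. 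What your version buys is self-containedness: you never invoke the Tucker operator or the Kronecker-product unfolding formula, only the definition of the mode-$n$ product and the fiber description of unfoldings, at the cost of a slightly more index-heavy argument. What the paper's version buys is brevity and reusability: the identity $\X = \Y\times_n\B$ and Proposition~\ref{prop:unfold} dispatch all modes $m\neq n$ in one stroke, in a form that meshes with the multilinear-algebra toolkit used elsewhere in the paper (e.g., in the proof of Proposition~\ref{prop:subtensor}). Both arguments are complete; in particular you correctly handle the one genuinely nontrivial point, the inequality $\rank\bigl(\mathbf X_{(m)}\bigr)\leq\rank\bigl(\mathbf Y_{(m)}\bigr)$ for $m\neq n$, and you rightly note that the coefficients in the row dependence are independent of the non-mode-$n$ coordinates, which is exactly what makes the transfer to mode-$m$ fibers legitimate.
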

\begin{proof}
By permuting the coordinates of $\X$ if necessary, we may assume without
loss of generality that $n=1$. It is easy to check that
$\Y\in\R^{R\times I_2\times\cdots\times I_N}$ is a subtensor of $\X$. Row~$s$ of the
matrix $\mathbf Y_{(1)} = \A\mathbf X_{(1)}$ is equal to row~$j_s$ of the matrix
$\mathbf X_{(1)}$ for $1\leq s\leq R$. Because
$\rank\bigl(\mathbf X_{(1)}\bigr) = R$ and rows $j_1,\dots,j_R$ of
$\mathbf X_{(1)}$ are linearly independent, $\mathbf X_{(1)}$ and
$\mathbf Y_{(1)}$ have the same rank. Moreover, there is a matrix
$\B\in\R^{I_1\times R}$ such that $\B\A\mathbf X_{(1)} = \mathbf X_{(1)}$,
so $\X = \X\times_1\B\A = \Y\times_1\B$. By Proposition~\ref{prop:unfold} it
follows that if $2\leq m\leq N$, then
\[
\mathbf X_{(m)}
  = \mathbf Y_{(m)}(\mathbf I_{I_N}\otimes\cdots\otimes\mathbf I_{I_{m+1}}\otimes
    \mathbf I_{I_{m-1}}\otimes\cdots\otimes\mathbf I_{I_2}\otimes\B^{\mathsf T})
\]
and
\[
\mathbf Y_{(m)}
  = \mathbf X_{(m)}(\mathbf I_{I_N}\otimes\cdots\otimes\mathbf I_{I_{m+1}}\otimes
    \mathbf I_{I_{m-1}}\otimes\cdots\otimes\mathbf I_{I_2}\otimes\A^{\mathsf T}).
\]
Thus $\mathbf X_{(m)}$ and $\mathbf Y_{(m)}$ have the same column
space, so they must have the same rank.
\end{proof}

\begin{proposition}
Let $\X\in\R^{I_1\times\cdots\times I_N}$ be a nonzero tensor, and set
$R_n = \rank\bigl(\mathbf X_{(n)}\bigr)$ for $1\leq n\leq N$. Then $\X$ has a
subtensor $\Y\in\R^{R_1\times\cdots\times R_N}$ with
$R_n = \rank\bigl(\mathbf Y_{(n)}\bigr)$ for $1\leq n\leq N$.
\end{proposition}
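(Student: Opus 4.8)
The plan is to reduce the modes of $\X$ one at a time by repeatedly applying Lemma~\ref{lem:rank}, each application shrinking a single mode to its proper size while leaving all of the unfolding ranks unchanged. I would set $\X^{(0)} = \X$ and build a chain of subtensors $\X^{(0)}, \X^{(1)}, \dots, \X^{(N)}$ in which $\X^{(n)}$ is obtained from $\X^{(n-1)}$ by applying the lemma in mode~$n$.

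To run the induction I would maintain two invariants for $\X^{(n)}$: every mode-$m$ unfolding has rank $R_m$, and the first $n$ modes have dimensions $R_1,\dots,R_n$. Passing from $\X^{(n-1)}$ to $\X^{(n)}$, the first invariant gives $\rank\bigl(\mathbf X^{(n-1)}_{(n)}\bigr) = R_n$, so Lemma~\ref{lem:rank} applies in mode~$n$ and yields a subtensor $\X^{(n)}$ of $\X^{(n-1)}$ whose mode-$n$ dimension is exactly $R_n$ and for which $\rank\bigl(\mathbf X^{(n)}_{(m)}\bigr) = \rank\bigl(\mathbf X^{(n-1)}_{(m)}\bigr) = R_m$ for all $m$. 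Since the underlying operation $\X^{(n-1)}\times_n\A$ changes only the size of mode~$n$, the dimensions $R_1,\dots,R_{n-1}$ fixed at earlier steps are left intact, so both invariants persist.

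Setting $\Y = \X^{(N)}$ completes the argument. The subtensor relation is transitive---choosing a sub-collection of indices in each mode and then choosing a further sub-collection amounts to choosing the composite sub-collection---so $\Y$ is a subtensor of $\X$; and the invariants say precisely that $\Y\in\R^{R_1\times\cdots\times R_N}$ with $\rank\bigl(\mathbf Y_{(n)}\bigr) = R_n$ for every $n$.

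I expect the main point, already isolated in Lemma~\ref{lem:rank}, to be that collapsing one mode down to its rank preserves the ranks of every other unfolding at the same time; without this simultaneous preservation the successive reductions could interfere, and there would be no reason for a single subtensor to realize all $N$ ranks. The remaining ingredients---transitivity of the subtensor relation and the fact that $\times_n$ leaves the other mode sizes unchanged---are routine.
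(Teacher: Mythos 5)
Your proof is correct and follows the paper's argument exactly: the paper also applies Lemma~\ref{lem:rank} once per mode and cites induction, while you have simply written out the inductive bookkeeping (the two invariants and transitivity of the subtensor relation) that the paper leaves implicit. No substantive difference.
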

\begin{proof}
Lemma~\ref{lem:rank} shows that $\X$ has a subtensor
$\X'\in\R^{R_1\times I_2\times\cdots\times I_N}$ such that
$\rank\bigl(\mathbf X'_{(n)}\bigr) = R_n$ for all $n$ with $1\leq n\leq N$.
Applying the lemma inductively gives the desired result.
\end{proof}

\begin{corollary}
\label{cor:tucker}
Let $r$ denote either the max rank or the submax rank. If $\X$ is a nonzero
tensor, then there is a subtensor $\Y\in\R^{J_1\times\cdots\times J_N}$ such that
\[ r(\X) = r(\Y) = J_n \]
for some $n$ with $1\leq n\leq N$.
\end{corollary}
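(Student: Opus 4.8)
The plan is to invoke the immediately preceding proposition, which produces a subtensor $\Y\in\R^{R_1\times\cdots\times R_N}$—so that the dimension $J_n$ equals $R_n$ for each $n$—whose unfoldings satisfy $\rank(\mathbf Y_{(n)}) = R_n$ for all $n$ with $1\leq n\leq N$. The crucial structural observation is that both the max rank and the submax rank are determined entirely by the multilinear rank tuple $(R_1,\dots,R_N)$. Since the subtensor $\Y$ has exactly the same multilinear rank tuple as $\X$, this immediately yields $r(\Y) = r(\X)$ for either choice of $r$. Thus the whole corollary reduces to locating an index $n$ for which $J_n = r(\X)$.

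With that reduction in hand, I would treat the two functions separately. For the max rank, $r(\X) = \max\{R_1,\dots,R_N\}$ is attained at some index $n_0$, and then $J_{n_0} = R_{n_0} = \max\{R_1,\dots,R_N\} = r(\X) = r(\Y)$, giving the chain $r(\X) = r(\Y) = J_{n_0}$ as required. For the submax rank I would argue that the second-largest value of the multiset $\{R_1,\dots,R_N\}$ is, by definition, itself a member of that multiset; hence there is an index $n$ with $R_n = \submax\{R_1,\dots,R_N\}$, and for this index $J_n = R_n = r(\X) = r(\Y)$.

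The entire argument rests on the preceding proposition together with the fact that both rank functions are symmetric functions of the multilinear rank, so I do not anticipate any substantial technical difficulty. The one point meriting a little care—and the closest thing to an obstacle—is verifying that the submax value is genuinely realized at a concrete index $n$ rather than being merely an abstract order statistic. This is immediate from the definition of $\submax$ as the second-largest \emph{value of the multiset}, and the degenerate convention $\submax\{R_1\} = R_1$ when $N=1$ is covered by the $N=1$ instance of the proposition.
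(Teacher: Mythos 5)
Your proposal is correct and is exactly the argument the paper intends: the corollary is stated without proof precisely because it follows from the preceding proposition in the way you describe, namely that the subtensor $\Y\in\R^{R_1\times\cdots\times R_N}$ has the same multilinear rank tuple as $\X$, both the max and submax ranks depend only on that tuple, and each of those values is attained as some $R_n = J_n$ (with the $N=1$ convention handling the degenerate case). Your attention to the fact that the submax value is genuinely a member of the multiset is the right point to check, and it holds by definition.
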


Qi \etal~\cite{Qi20} ask whether the minimum QZC rank function $\mu$ given by
Proposition~\ref{prop:min} is equal to the submax rank. To answer this
question, we begin by letting $\sim$ denote the weakest equivalence relation
satisfying the following conditions on the collection $\T$ of all real tensors:
\begin{enumerate}
\item If $\X\in\T$ and $\alpha\in\R-\{0\}$, then $\X\sim\alpha\X$.
\item Suppose that $\X\in\R^{I_1\times\cdots\times I_N}$ and $\pi\in\S_N$. Let
$\Y$ be the tensor given by $y_{i_1\cdots i_N} = x_{i_{\pi(1)}\cdots i_{\pi(N)}}$.
Then $\X\sim\Y$.
\end{enumerate}

\begin{proposition}
\label{prop:mincond}
Let $r:\T\to\mathbb N\cup\{0\}$ be a function satisfying the following
conditions:
\begin{enumerate}
\item $r(\X) = 0$ if and only if $\X = \mathbf 0$, and $r(\X) = 1$ if and only
if $\X$ is a rank-one tensor.
\item $r$ is constant on equivalence classes.
\item Suppose that $\X\neq\mathbf 0$ and $\X$ is not a rank-one tensor.
Set
\[ S_0 = \{M\mid\text{$\I_{M,N}$ is a subtensor of some $\Y\sim\X$}\} \]
and
\[
S_1 = \{\rank(\A)\mid\text{$\A$ is a submatrix of some $\Y\sim\X$}\}.
\]
The value $r(\X)$ is given by $r(\X) = \max(S_0\cup S_1\cup\{2\})$.
\end{enumerate}
Then $r = \mu$.
\end{proposition}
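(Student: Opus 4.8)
The plan is to prove the two relations $r\preceq\mu$ and $\mu\preceq r$ separately, where $\preceq$ is the partial order on QZC rank functions. For the first relation I will not even use that $r$ is a QZC rank function: I will show directly that $r(\X)\le s(\X)$ for \emph{every} QZC rank function $s$, so that in particular $r(\X)\le\mu(\X)$. For the second I will verify that $r$ itself satisfies the six axioms, so that Proposition~\ref{prop:min} forces $\mu(\X)\le r(\X)$. Together these give $r=\mu$.

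To establish $r(\X)\le s(\X)$, fix a QZC rank function $s$ and a tensor $\X$. The cases $\X=\mathbf 0$ and $\X$ of rank one are immediate from (QZC1) applied to both functions, so assume $\X$ is nonzero and not of rank one; then $\X$ has order $N\ge 2$ and $r(\X)=\max(S_0\cup S_1\cup\{2\})$. I will check that every element of $S_0\cup S_1\cup\{2\}$ is at most $s(\X)$. If $M\in S_0$, then $\I_{M,N}$ is a subtensor of some $\Y\sim\X$; now (QZC2) gives $s(\I_{M,N})=M$, (QZC6) gives $s(\I_{M,N})\le s(\Y)$, and (QZC4) together with (QZC5) gives $s(\Y)=s(\X)$, so $M\le s(\X)$. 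If $\rank(\A)\in S_1$ with $\A$ a submatrix of some $\Y\sim\X$, then after a mode permutation (QZC5) the axiom (QZC3) evaluates $s(\A)=\rank(\A)$, and (QZC6) with (QZC4),(QZC5) gives $\rank(\A)\le s(\Y)=s(\X)$. Finally $2\le s(\X)$ because (QZC1) forbids the value $0$ or $1$ for a nonzero non-rank-one tensor. Taking the maximum yields $r(\X)\le s(\X)$.

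For the reverse relation I must show that $r$ is a QZC rank function. Axiom (QZC1) is hypothesis~(1), while (QZC4) and (QZC5) follow from hypothesis~(2), since scaling and mode permutation are the generating relations for $\sim$. To check (QZC2) I would compute, for $\X=\I_{M,N}$ with $M\ge 2$, that every $\Y\sim\I_{M,N}$ equals $\alpha\,\I_{M,N}$ for some scalar $\alpha$, whence $\max S_0=M$ and every submatrix has rank at most~$1$; thus $r(\I_{M,N})=\max(M,2)=M$, the case $M=1$ being a rank-one tensor. Axiom (QZC3) is handled the same way: if $\X$ corresponds to a matrix $\A$ of rank $\rho\ge 2$, then $\X$ is its own submatrix so $\rho\in S_1$, while every submatrix of every $\Y\sim\X$ has rank at most $\rho$ and any identity subtensor forces $M\le\rho$ (or $M\le 1$ when singleton modes are present); hence $r(\X)=\rho$, the cases $\rho\le 1$ following from hypothesis~(1).

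The crux is axiom (QZC6). Here I would first prove a lifting lemma: if $\Y$ is a subtensor of $\X$ and $\Z\sim\Y$, then there is $\boldsymbol{\mathscr W}\sim\X$ with $\Z$ a subtensor of $\boldsymbol{\mathscr W}$. This follows by induction along a chain of generating relations joining $\Y$ to $\Z$, using that scaling and mode permutation each commute with passage to a subtensor, together with the transitivity of the subtensor relation. Granting the lemma, for a subtensor $\Y$ of a nonzero non-rank-one tensor $\X$ (which is then itself nonzero and not of rank one) every witness for the sets $S_0$ or $S_1$ associated to $\Y$ lifts to a witness for the corresponding sets associated to $\X$; hence these sets for $\Y$ are contained in those for $\X$, and therefore $r(\Y)\le r(\X)$, with the cases of $\Y$ zero or of rank one being immediate. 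This lifting lemma is the main obstacle, since it is precisely what makes the maximum defining $r$ monotone under the formation of subtensors. Once it is in place all six axioms hold, Proposition~\ref{prop:min} gives $\mu\preceq r$, and combined with the first part we conclude $r=\mu$.
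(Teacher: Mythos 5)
Your proposal is correct and follows essentially the same route as the paper: you verify that $r$ satisfies all six QZC axioms (including the same key lifting observation for (QZC6), which the paper states as ``one can easily check'' and you rightly flag as the crux), and you show $r(\X)\le s(\X)$ for every QZC rank function $s$, which is exactly the paper's argument with $s=\mu$. The two halves combine via Proposition~\ref{prop:min} just as in the paper's proof.
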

\begin{proof}
The first step is to show that $r$ is a QZC rank function. If $N\geq 2$ and
$M = 1$, then $\I_{M,N}$ is a rank-one tensor, so $r(\I_{M,N}) = M$ by
Condition~(1). If $M > 1$, then $\I_{M,N}$ is not a rank-one tensor. Suppose
that $\Y\sim\I_{M,N}$ and $\A$ is a submatrix of $\Y$. Then one can easily
check that $\rank(\A)\leq 1$. Thus Condition~(3) implies that $r(\I_{M,N}) = M$,
and Axiom~(QZC2) is satisfied.

Suppose that $\X\in\R^{I_1\times I_2\times 1\times\cdots\times1}$, that
$\X\neq\mathbf 0$, and that $\X$ is not a rank-one tensor. Axiom~(QZC3) is
clearly satisfied unless there is a $\Y\sim\X$ such that $\I_{M,N}$ is a
subtensor of $\Y$ with $M\geq 2$. But in this case $\Y$ and $\X$ must be
tensors of order $N$, so $N=2$ and $\X\in\R^{I_1\times I_2}$. Thus the tensors
$\X$ and $\Y$ are actually matrices $\mathbf X$ and $\mathbf Y$. The identity
matrix $\mathbf I_M$ is a submatrix of $\mathbf Y$, so
$\rank(\mathbf Y)\geq M$. It follows that
\[
r(\X) = \max\{\rank(\A)\mid\text{$\A$ is a submatrix of some $\Y\sim\X$}\}
      = \rank(\mathbf X).
\]
Thus Axiom~(QZC3) holds.

The condition that $r$ is constant on equivalence classes is equivalent to
Axioms~(QZC4) and~(QZC5), and Condition~(1) is simply a restatement
of Axiom~(QZC1). To prove that $r$ is a QZC rank function,
therefore, it only remains to prove that Axiom~(QZC6) is satisfied. Suppose
that $\Y$ is a subtensor of $\X$. To prove that $r(\Y)\leq r(\X)$, we may
assume that $\Y\neq\mathbf 0$ so that $\X\neq\mathbf 0$. If $r(\Y) = 1$, then
$r(\Y)\leq r(\X)$ by Condition~(1). Since any nonzero subtensor of a rank-one
tensor is itself a rank-one tensor, we may assume by Condition~(1) that
$\X\neq\mathbf 0$, $\Y\neq\mathbf 0$, and that neither $\X$ nor $\Y$ is a
rank-one tensor.

If $\Y'\sim\Y$, then one can easily check that there is a tensor
$\X'\sim\X$ such that $\Y'$ is a subtensor of $\X'$. Thus if $\I_{M,N}$ is a
subtensor of $\Y'$, then $\I_{M,N}$ is also a subtensor of $\X'$. Similarly,
if $\A$ is a submatrix of $\Y'$, then $\A$ is also a submatrix of $\X'$.
Condition~(3) now implies that $r(\Y)\leq r(\X)$, and $r$ is a QZC rank
function.

Every QZC rank function satisfies Conditions~(1) and~(2). Suppose that
$\X\neq\mathbf 0$ and $\X$ is not a rank-one tensor so that $\mu(\X)\geq 2$.
If $\Y\sim\X$ and $\I_{M,N}$ is a subtensor of $\Y$, then $\mu$ satisfies
$M = \mu(\I_{M,N})\leq\mu(\Y) = \mu(\X)$. Similarly, if $\A$ is a submatrix
of $\Y$, then Axioms~(QZC3),~(QZC5), and~(QZC6) imply that
$\rank(\A)\leq\mu(\Y) = \mu(\X)$. Thus $r(\X)\leq\mu(\X)$, and the
minimality of $\mu$ implies that $\mu = r$.
\end{proof}

We can now provide an example showing that the minimum QZC rank function
$\mu$ is not equal to the submax rank.

\begin{example}
\label{ex:diagonal}
\rm Let $\D$ denote the $3\times 3\times 3$ diagonal tensor with
$d_{111} = d_{222} = 1$ and $d_{333} = -1$, and let $\sigma$ denote the submax
rank. It is easy to check that the three unfoldings $\mathbf D_{(1)}$,
$\mathbf D_{(2)}$, and $\mathbf D_{(3)}$ all have rank~$3$, so $\sigma(\D) = 3$.
But if $\Y\sim\D$, then every nonzero submatrix of $\Y$ has rank one. Moreover,
exactly two of the nonzero entries of $\Y$ are equal, so the largest value of
$M$ for which $\I_{M,N}$ can be a subtensor of $\Y$ is $M = 2$. Thus
$\mu(\D) = 2$, and $\mu$ is not equal to $\sigma$.
\end{example}

The previous example shows that it is possible for a QZC rank function $r$
to have the property that $r(\D) < D$ even when $\D$ is a diagonal tensor with
$D$ nonzero entries along the diagonal. This property seems undesirable, but
the next result provides an example of a QZC rank function having a property
that seems even less desirable.

\begin{proposition}
\label{prop:add0}
Let $r:\T\to\mathbb N\cup\{0\}$ be a function satisfying the following
conditions:
\begin{enumerate}
\item $r(\X) = 0$ if and only if $\X = \boldsymbol 0$, and $r(\X) = 1$ if and
only if $\X$ is a rank-one tensor.
\item $r$ is constant on equivalence classes.
\item Suppose that $\X\in\R^{I_1\times\cdots\times I_N}$ and there are exactly
two modes $n_1 < n_2$ such that $I_{n_1} > 1$ and $I_{n_2} > 1$. If
$\mathbf X\in\R^{I_{n_1}\times I_{n_2}}$ is the matrix associated to $\X$, then
$r(\X) = \rank(\mathbf X)$.
\item Suppose that $\X\in\R^{I_1\times\cdots\times I_N}$ does not have rank one and
$\X\neq\mathbf 0$. If there are at least three modes $n$ with $I_n > 1$, then
$r(\X) = \max\{I_1,\dots,I_N\}$.
\end{enumerate}
Then $r$ is a QZC rank function.
\end{proposition}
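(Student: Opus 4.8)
The function $r$ is specified by a case split: Condition~(1) handles the zero tensor and the rank-one tensors, Condition~(3) handles tensors that are neither of these but have exactly two modes of size exceeding~$1$, and Condition~(4) handles those with at least three such modes. The first thing I would record is that these cases are exhaustive, since if $\X\in\R^{I_1\times\cdots\times I_N}$ has at most one mode $n$ with $I_n>1$, then $\X$ can be identified with a vector and so is either $\mathbf 0$ or a rank-one tensor. Thus $r$ is well defined on all of $\T$, and the plan is to verify the six axioms in turn. Three are essentially immediate: Axiom~(QZC1) is Condition~(1) restated, while Axioms~(QZC4) and~(QZC5) follow from Condition~(2), because the equivalence relation $\sim$ is generated precisely by scaling and by mode permutations.

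For Axiom~(QZC2) I would treat $\I_{M,N}$ by cases on $M$ and $N$. When $M=1$ the tensor $\I_{1,N}$ is rank one, so Condition~(1) gives $r(\I_{1,N})=1=M$. When $M\geq2$ and $N=2$, the tensor is the identity matrix $\mathbf I_M$, which has two modes of size $M>1$, so Condition~(3) gives $r=\rank(\mathbf I_M)=M$. When $M\geq2$ and $N\geq3$, the tensor is not rank one and has $N\geq3$ modes of size $M>1$, so Condition~(4) gives $r=\max\{M,\dots,M\}=M$. For Axiom~(QZC3), a tensor $\X\in\R^{I_1\times I_2\times1\times\cdots\times1}$ is identified with a matrix $\mathbf X\in\R^{I_1\times I_2}$. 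If $\rank(\mathbf X)\in\{0,1\}$, then $\X$ is $\mathbf 0$ or rank one and Condition~(1) returns the matrix rank; otherwise $\rank(\mathbf X)\geq2$ forces $I_1,I_2\geq2$, so exactly two modes exceed~$1$ and Condition~(3) gives $r(\X)=\rank(\mathbf X)$.

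The substance of the argument is Axiom~(QZC6). Given a subtensor $\Y\in\R^{J_1\times\cdots\times J_N}$ of $\X$, I may assume $\Y\neq\mathbf 0$, hence $\X\neq\mathbf 0$, and if $\Y$ is rank one the inequality is clear since $r(\X)\geq1$. Because any nonzero subtensor of a rank-one tensor is again rank one, I may then assume that neither $\X$ nor $\Y$ is rank one. Let $a$ and $b$ count the modes with $I_n>1$ and with $J_n>1$, respectively; since $J_n\leq I_n$ for every $n$, the modes active in $\Y$ lie among those active in $\X$, so $b\leq a$, and $b\geq2$ because $\Y$ is not rank one. I would then split on $b$. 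If $b\geq3$, then $a\geq3$ as well, both values come from Condition~(4), and $J_n\leq I_n$ gives $r(\Y)=\max\{J_1,\dots,J_N\}\leq\max\{I_1,\dots,I_N\}=r(\X)$. If $b=2$ and $a=2$, the two active modes of $\Y$ and $\X$ coincide, the matrix $\mathbf Y$ is a submatrix of $\mathbf X$, and Condition~(3) gives $r(\Y)=\rank(\mathbf Y)\leq\rank(\mathbf X)=r(\X)$.

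The remaining subcase $b=2$, $a\geq3$ is the one I expect to be the main obstacle, because it is the only place where the two different formulas must be compared across the matrix/higher-order boundary: here $r(\Y)=\rank(\mathbf Y)\leq\min(J_{n_1},J_{n_2})\leq\max\{I_1,\dots,I_N\}=r(\X)$, where $n_1<n_2$ are the active modes of $\Y$. Once the mode counting is organized so that $b\leq a$ and the three subcases are cleanly separated, every comparison reduces to an elementary fact about matrix rank or about maxima of dominated dimension vectors, so I anticipate no further difficulty.
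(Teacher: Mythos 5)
Your proof is correct and follows essentially the same route as the paper's: the easy axioms are dispatched by Conditions~(1)--(3), Axiom~(QZC2) is split on $M=1$, $M\geq2$ with $N=2$, and $M\geq2$ with $N\geq3$, and Axiom~(QZC6) is handled by exactly the paper's three subcases (both tensors in the Condition~(4) regime, both in the Condition~(3) regime with coinciding active modes and a submatrix comparison, and the mixed case bounded through $\max\{I_1,\dots,I_N\}$). Your explicit mode counts $a$ and $b$ and the use of $\min(J_{n_1},J_{n_2})$ rather than $\max$ in the mixed case are only cosmetic refinements of the same argument.
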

\begin{proof}
It is easy to check that the four conditions given in the statement of the
proposition are consistent, so there is a unique function $r$ satisfying the
conditions. Axiom~(QZC1) is simply a restatement of Condition~(1), and
Axioms~(QZC4) and~(QZC5) are equivalent to Condition~(2). The function $r$
satisfies Axiom~(QZC3) by Condition~(3).

Assume that $N\geq 2$. If $M = 1$, then $\I_{M,N}$ has rank one, so
$r(\I_{M,N}) = 1$ by Condition~(1). If $M > 1$ and $N = 2$, then
$r(\I_{M,N}) = M$ by Condition~(3). Finally, if $M > 1$ and $N > 2$, then
Condition~(4) implies that $r(\I_{M,N}) = M$. Thus Axiom~(QZC2) is satisfied.

Finally, suppose that $\Y\in\R^{J_1\times\cdots\times J_N}$ is a subtensor of
$\X\in\R^{I_1\times\cdots\times I_N}$. If $\Y = \boldsymbol 0$, then
$r(\Y)\leq r(\X)$ by Condition~(1). If $\Y$ is a rank-one tensor, then
$\X\neq\boldsymbol 0$, so Condition~(1) implies that $r(\Y) = 1\leq r(\X)$. Now
suppose that $\Y\neq\boldsymbol 0$ does not have rank one. Then at least two
modes $n$ have dimension $J_n > 1$. If more than two modes have this property,
then Condition~(4) implies that
\[ r(\Y) = \max\{J_1,\dots,J_N\} \leq \max\{I_1,\dots,I_N\} = r(\X). \]
Thus we may assume that there are exactly two modes $n_1<n_2$ such that
$J_{n_1} > 1$ and $J_{n_2} > 1$. If $\mathbf Y\in\R^{J_{n_1}\times J_{n_2}}$ is the
matrix corresponding to $\Y$, then Condition~(3) implies that
$r(\Y) = \rank(\mathbf Y)\leq\max\{J_{n_1},J_{n_2}\}$. If exactly two modes of
$\X$ have dimension greater than one, then these modes must be $n_1$ and $n_2$.
Let $\mathbf X\in\R^{I_{n_1}\times I_{n_2}}$ be the matrix corresponding to $\X$.
Then $\mathbf Y$ is a submatrix of $\mathbf X$, and
\[ r(\Y) = \rank(\mathbf Y)\leq\rank(\mathbf X) = r(\X) \]
by Condition~(3). If more than two modes of $\X$ have dimension greater than
one, then
\[
r(\Y) = \rank(\mathbf Y)
      \leq\max\{J_{n_1},J_{n_2}\}
      \leq\max\{I_1,\dots,I_N\}
      = r(\X)
\]
by Condition~(4). Thus Axiom~(QZC6) is satisfied, and $r$ is a QZC rank
function.
\end{proof}

\begin{example}
\label{ex:slab}
\rm Let $r$ be the QZC rank function defined by the conditions given in
Proposition~\ref{prop:add0}. Consider the tensor $\X\in\R^{2\times 2\times 3}$
with
\[
x_{i_1i_2i_3} = \begin{cases}
              1 & \text{if $i_1=i_2=i_3=1$ or $i_1=i_2=i_3=2$,} \\
              0 & \text{otherwise.}
              \end{cases}
\]
Then Condition~(4) implies that $r(\X) = 3$. In particular,
$r(\X) > r(\I_{2,3})$, even though $\X$ is obtained by appending a slab of zeros
to the tensor $\I_{2,3}$.
\end{example}

The previous example gives a second undesirable property that a QZC rank
function may have. The next section gives a somewhat different set of axioms
that eliminates both of these properties.

\section{Axioms for tensor rank functions}
\label{sec:rank}

The results of Section~\ref{sec:QZCrank} show that the axioms for QZC rank
functions have at least two undesirable consequences: a diagonal tensor can
have a rank that is smaller than the number of nonzero diagonal entries, and
the rank of a tensor may increase when a slab of zeros is appended to it. In
this section the axioms of Qi \etal~\cite{Qi20} are modified to obtain a more
restrictive notion of tensor rank. The axioms proposed here rectify both of the
issues discussed in Section~\ref{sec:QZCrank}, but it is still possible that
they allow for other undesirable properties. Further modifications may be
necessary. 

\begin{definition}
\rm A {\em tensor rank function\/} is a function $r:\T\to\mathbb N\cup\{0\}$
satisfying the following axioms:
\begin{enumerate}
\item[\rm(TR1)] $r(\X) = 1$ if and only if $\X$ is a rank-one tensor.
\item[\rm(TR2)] If $N\geq 2$, then $r(\I_{M,N}) = M$.
\item[\rm(TR3)] If $\X\in\R^{I_1\times I_2}$, then $r(\X)$ is equal to the matrix
rank of $\X$.
\item[\rm(TR4)] If $\X\in\R^{I_1\times\cdots\times I_N}$ and $\X'$ is the
corresponding $(N+1)$-way tensor in $\R^{I_1\times\cdots\times I_N\times 1}$,
then $r(\X) = r(\X')$.
\item[\rm(TR5)] Suppose that $\X\in\R^{I_1\times\cdots\times I_N}$ and $\pi\in\S_N$.
Then the tensor $\Y$ given by $y_{i_1\cdots i_n} = x_{i_{\pi(1)}\cdots i_{\pi(N)}}$
satisfies $r(\Y) = r(\X)$.
\item[\rm(TR6)] If $\X\in\R^{I_1\times\cdots\times I_N}$ and $\A\in\R^{J\times I_n}$,
then $r(\X\times_n\A)\leq r(\X)$.
\end{enumerate}
\end{definition}

It is easy to see that the definition of CP rank proposed by
Hitchcock~\cite{Hitchcock27a, Hitchcock27b} and Kruskal~\cite{Kruskal77}
satifies the first five axioms, but it also satisfies Axiom~(TR6). Indeed,
suppose that $\X\in\R^{I_1\times\cdots\times I_N}$ is a nonzero tensor. Let
$\rank(\X)$ denote the smallest natural number $R$ such that $\X$ can be
expressed as a sum of $R$ tensors of rank one. Then there are nonzero vectors
$\mathbf a_r^{(n)}\in\R^{I_n}$ for $1\leq r\leq R$ and $1\leq n\leq N$ such that
\[ \X = \sum_{r=1}^R\mathbf a_r^{(1)}\circ\cdots\circ\mathbf a_r^{(N)}. \]
If $\A\in\R^{J\times I_n}$, then
\[
\X\times_n\A
  = \sum_{r=1}^R\mathbf a_r^{(1)}\circ\cdots\circ\mathbf a_r^{(n-1)}\circ
    \A\mathbf a_r^{(n)}\circ\mathbf a_r^{(n+1)}\circ\cdots\circ\mathbf a_r^{(N)},
\]
so $\rank(\X\times_n\A)\leq\rank(\X)$.

The following proposition is the first step toward showing that any tensor
rank function is also a QZC rank function.

\begin{proposition}
\label{prop:subtensor}
Let $r$ be a tensor rank function. If $\Y$ is a subtensor of $\X$, then
$r(\Y)\leq r(\X)$.
\end{proposition}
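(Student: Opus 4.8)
The plan is to exhibit the subtensor $\Y$ as the result of applying a sequence of mode-$n$ matrix products to $\X$, and then to invoke Axiom~(TR6) once for each mode. Suppose $\Y\in\R^{J_1\times\cdots\times J_N}$ is the subtensor of $\X\in\R^{I_1\times\cdots\times I_N}$ determined by the indices $1\leq i_{n1}<\cdots<i_{nJ_n}\leq I_n$, so that $y_{j_1\cdots j_N} = x_{i_{1j_1}\cdots i_{Nj_N}}$. For each $n$ I would introduce the selection matrix $\A^{(n)}\in\R^{J_n\times I_n}$ defined exactly as in Lemma~\ref{lem:rank}, with $a^{(n)}_{st} = 1$ when $t = i_{ns}$ and $a^{(n)}_{st} = 0$ otherwise.

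The first key point is that multiplying in mode $n$ by $\A^{(n)}$ simply restricts the index in mode $n$ to the chosen values: expanding the definition of the mode-$n$ product gives $(\X\times_n\A^{(n)})_{i_1\cdots i_{n-1}j_n i_{n+1}\cdots i_N} = x_{i_1\cdots i_{n-1}\,i_{nj_n}\,i_{n+1}\cdots i_N}$. Processing the modes in order therefore yields
\[ \Y = \llbracket\X;\A^{(1)},\dots,\A^{(N)}\rrbracket = \X\times_1\A^{(1)}\times_2\cdots\times_N\A^{(N)}, \]
so that forming a subtensor is a special case of the Tucker operator with selection matrices. The only part of this step that demands care is checking that the composite product reproduces $\Y$ in exactly the right order, but this is the same bookkeeping already carried out in the proof of Lemma~\ref{lem:rank}.

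With the identity above in hand, the conclusion follows by a short induction. Set $\X^{(0)} = \X$ and $\X^{(n)} = \X^{(n-1)}\times_n\A^{(n)}$ for $1\leq n\leq N$, noting that mode $n$ of $\X^{(n-1)}$ still has dimension $I_n$, so the product is well defined and $\X^{(N)} = \Y$. Axiom~(TR6) gives $r(\X^{(n)}) = r(\X^{(n-1)}\times_n\A^{(n)})\leq r(\X^{(n-1)})$ at each step, whence $r(\Y) = r(\X^{(N)})\leq\cdots\leq r(\X^{(0)}) = r(\X)$. I do not anticipate any genuine obstacle: once one recognizes subtensor formation as repeated mode multiplication, Axiom~(TR6) is tailor-made to deliver the monotonicity, and this is precisely why (TR6) replaces the subtensor axiom~(QZC6) in the new list of axioms.
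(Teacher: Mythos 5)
Your proposal is correct and follows essentially the same route as the paper: both express the subtensor as $\llbracket\X;\A^{(1)},\dots,\A^{(N)}\rrbracket$ using the same $0$--$1$ selection matrices and then apply Axiom~(TR6) mode by mode. The only difference is cosmetic---you make the step-by-step induction $r(\X^{(N)})\leq\cdots\leq r(\X^{(0)})$ explicit, whereas the paper leaves that iteration implicit in a single appeal to (TR6).
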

\begin{proof}
Suppose that $\Y\in\R^{J_1\times\cdots J_N}$ is a subtensor of
$\X\in\R^{I_1\times\cdots\times I_N}$. For $1\leq n\leq N$ there are indices
$i_{n1},\dots,i_{nJ_n}$ such that $1\leq i_{n1}<\cdots<i_{nJ_n}\leq I_n$ and
\[ y_{j_1\cdots j_N} = x_{i_{1j_1}\cdots i_{Nj_N}}. \]
Let $\A^{(n)}$ be the $J_n\times I_n$ matrix with a~$1$ in position $(j,i_{nj})$
for $1\leq j\leq J_n$ and zeros elsewhere. Then
$\Y = \llbracket\X;\A^{(1)},\dots,\A^{(N)}\rrbracket$, so Axiom~(TR6) implies that
$r(\Y)\leq r(\X)$.
\end{proof}

Example~\ref{ex:diagonal} describes a QZC rank function $r$ and a diagonal
tensor $\D$ with $D$ nonzero entries such that $r(\D) < D$. Such a phenomenon
cannot occur when $r$ is a tensor rank function. Indeed, if $D = 1$, then $\D$
is a rank-one tensor, and $r(\D) = D$ by Axiom~(TR1). If $D > 1$, then there is
an invertible diagonal matrix $\A$ such that $\D\times_1\A$ is a diagonal
tensor with $D$ ones on the diagonal. If $\D$ is a tensor of order $N$, then
$\I_{D,N}$ is a subtensor of $\D\times_1\A$, and $r(\I_{D,N}) = D$ by
Axiom~(TR2). Because $\A$ is invertible, Axiom~(TR6) implies that
$r(\D\times_1\A) = r(\D)$, and Proposition~\ref{prop:subtensor} shows that
\[ D = r(\I_{D,N})\leq r(\D\times_1\A) = r(\D). \]

\begin{proposition}
\label{prop:zero}
Let $r$ be a tensor rank function. Then $r(\X) = 0$ if and only if
$\X = \mathbf 0$.
\end{proposition}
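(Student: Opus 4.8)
The plan is to treat the two implications separately; the reverse one is routine and the forward one is where the work lies. For the reverse direction I would prove the contrapositive: if $\X\neq\mathbf 0$, then $r(\X)\geq 1$. Choosing an index tuple $(i_1,\dots,i_N)$ with $x_{i_1\cdots i_N}\neq 0$, consider the $1\times\cdots\times1$ subtensor $\Y$ consisting of that single entry. Since a nonzero $1\times\cdots\times1$ tensor is an outer product of nonzero one-dimensional vectors, $\Y$ is a rank-one tensor, so $r(\Y)=1$ by Axiom~(TR1). Proposition~\ref{prop:subtensor} then gives $r(\X)\geq r(\Y)=1>0$, and taking the contrapositive yields $r(\X)=0\Rightarrow\X=\mathbf 0$.

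The forward implication is the main obstacle, because no axiom evaluates $r$ on the zero tensor directly: Axiom~(TR1) only tells us that $\mathbf 0$, not being a rank-one tensor, satisfies $r(\mathbf 0)\neq 1$, and (TR2) speaks only of identity tensors. To pin the value down to $0$ I would anchor the argument at the one case where the rank of a zero tensor is known explicitly. The $1\times 1$ zero matrix has matrix rank $0$, so $r(\mathbf 0)=0$ for $\mathbf 0\in\R^{1\times 1}$ by Axiom~(TR3).

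From this base case I would build up a zero tensor of arbitrary shape and order while controlling the rank from above. First, repeatedly appending trivial trailing modes via Axiom~(TR4) shows that the order-$N$ zero tensor $\mathbf 0\in\R^{1\times\cdots\times1}$ still has rank $0$. Next, for each mode $n$ choose any matrix $\A^{(n)}\in\R^{I_n\times 1}$; forming the successive mode products $\mathbf 0\times_1\A^{(1)}\times_2\cdots\times_N\A^{(N)}$ yields precisely the zero tensor in $\R^{I_1\times\cdots\times I_N}$, since the zero tensor maps to zero under every mode product. Applying Axiom~(TR6) once per mode gives
\[ r(\mathbf 0_{I_1\times\cdots\times I_N})\leq r(\mathbf 0_{1\times\cdots\times1})=0, \]
and because $r$ takes values in $\mathbb N\cup\{0\}$ this forces $r(\mathbf 0)=0$; the order-one case is handled by first appending a trivial mode via Axiom~(TR4) and invoking Axiom~(TR3).

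The only delicate point is that (TR4) and (TR6) must be deployed to transport the known rank of the $1\times 1$ zero matrix out to zero tensors of every shape and order, since the remaining axioms say nothing about $\mathbf 0$ at all. Combining this with the reverse direction established above completes the proof.
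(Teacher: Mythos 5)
Your proof is correct, but your forward direction takes a genuinely different route from the paper's. You anchor at Axiom~(TR3) --- the $1\times 1$ zero matrix has matrix rank $0$ --- then transport that value outward, first using Axiom~(TR4) to append trivial modes and then using Axiom~(TR6) with matrices $\A^{(n)}\in\R^{I_n\times 1}$ to inflate each mode, with a separate (TR4)+(TR3) step for order-one zero tensors; every step checks out, since the zero tensor is fixed under mode products and $r$ takes values in $\mathbb N\cup\{0\}$. The paper instead never touches (TR3) or (TR4): it takes a rank-one tensor $\X$ of the \emph{same shape} as the zero tensor $\Z$, stacks via $\A = \bigl[\begin{smallmatrix}\mathbf I_{I_1}\\ \mathbf 0\end{smallmatrix}\bigr]$ to form $\Y = \X\times_1\A$, observes that $\Z$ is a subtensor of $\Y$, and concludes $r(\Z)\leq r(\Y)\leq r(\X)=1$ by Proposition~\ref{prop:subtensor} and (TR6); since $\Z$ is not rank one, (TR1) excludes the value $1$, forcing $r(\Z)=0$. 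The paper's argument thus derives the forward direction from (TR1) and (TR6) alone (Proposition~\ref{prop:subtensor} being itself a consequence of (TR6)), which shows the conclusion holds under a strictly smaller subset of the axioms and foreshadows the paper's later slab-of-zeros discussion; your version buys a more direct, computational derivation at the cost of invoking two additional axioms, and it gets the upper bound $r=0$ in one shot rather than by eliminating the value $1$. Your reverse direction is identical to the paper's.
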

\begin{proof}
Let $\Z\in\R^{I_1\times\cdots\times I_N}$ be the zero tensor, and let
$\mathbf a^{(n)}\in\R^{I_n}$ be any nonzero vector for $1\leq n\leq N$. Then
$\X = \mathbf a^{(1)}\circ\cdots\circ\mathbf a^{(N)}$ is a rank-one tensor, so
$r(\X) = 1$ by Axiom~(TR1). Let $\mathbf 0$ denote the $I_1\times I_1$ zero
matrix. Set
\[ \A = \begin{bmatrix} \mathbf I_{I_1} \\ \mathbf 0\end{bmatrix} \]
and $\Y = \X\times_1\A$. Then the zero tensor $\Z$ is a subtensor of
$\Y\in\R^{2I_1\times I_2\times\cdots\times I_N}$. Thus
Proposition~\ref{prop:subtensor} and Axiom~(TR6) imply that
$r(\Z)\leq r(\Y)\leq r(\X) = 1$. Since $\Z$ is not a rank-one tensor, it
follows that $r(\Z) = 0$ by Axiom~(TR1).

Conversely, suppose that $\X\in\R^{I_1\times\cdots\times I_N}$ is nonzero. Then
there are indices $i_1,\dots,i_N$ such that $x_{i_1\cdots i_N}\neq 0$, and $\X$
has a rank-one subtensor $\Y\in\R^{1\times\cdots\times 1}$ given by
$y_{1\cdots 1} = x_{i_1\cdots i_N}$. Thus $1 = r(\Y)\leq r(\X)$ by Axiom~(TR1) and
Proposition~\ref{prop:subtensor}, so $r(\X)\neq 0$.
\end{proof}

Example~\ref{ex:slab} gives a QZC rank function $r$ and two tensors $\X$ and
$\Y$ such that $r(\X) < r(\Y)$ even though $\Y$ can be obtained from $\X$
simply by adding a slab of zeros. A slight variant of the idea used to prove
Proposition~\ref{prop:zero} shows that this phenomenon cannot occur for tensor
rank functions. Indeed, suppose that $\X\in\R^{I_1\times\cdots\times I_N}$ and
\[ \A = \begin{bmatrix} \mathbf I_{I_n} \\ \mathbf 0\end{bmatrix}, \]
where $\mathbf 0$ denotes the $J_n\times I_n$ zero matrix for some $J_n > 0$.
Then $\Y = \X\times_n\A$ is obtained by appending $J_n$ slabs of zeros to $\X$
in mode $n$. But $\X = \Y\times_n\A^{\mathsf T}$ because $\A^{\mathsf T}$ is a left
inverse of $\A$, and it follows from Axiom~(TR6) that $r(\X) = r(\Y)$ for any
tensor rank function $r$.

Propositions~\ref{prop:subtensor} and~\ref{prop:zero} imply that any tensor
rank function is a QZC rank function. Many of the results for QZC rank
functions generalize in a straightforward way to tensor rank functions. For
example, it is easy to see that the class of all tensor rank functions contains
a unique minimum function. One could also define an equivalence relation
analogous to $\sim$ and use it to prove a result for tensor rank functions
similar to Proposition~\ref{prop:mincond}. We will not pursue these ideas
further.

The next result shows that tensor rank functions have another property
that one would expect.

\begin{proposition}
Suppose that $\X\in\R^{I_1\times\cdots\times I_N}$ is a tensor and
$\pi_n\in\S_{I_n}$ is a permutation for $1\leq n\leq N$. Let $r$ be a tensor
rank function. Then the tensor $\Y\in\R^{I_1\times\cdots\times I_N}$ given by
$y_{i_1\cdots i_N} = x_{\pi_1(i_1)\cdots\pi_N(i_N)}$ satisfies $r(\Y) = r(\X)$.
\end{proposition}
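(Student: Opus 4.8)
The plan is to realize the index permutation in each mode as a mode-$n$ matrix product with a permutation matrix and then invoke Axiom~(TR6) in both directions. For each $n$ with $1\leq n\leq N$, let $P^{(n)}\in\R^{I_n\times I_n}$ be the permutation matrix whose $(j,i)$ entry equals $1$ precisely when $i=\pi_n(j)$ and $0$ otherwise. Directly from the definition of the mode-$n$ matrix product,
\[
(\X\times_n P^{(n)})_{i_1\cdots i_{n-1}ji_{n+1}\cdots i_N}
  = \sum_{i_n=1}^{I_n}x_{i_1\cdots i_N}p^{(n)}_{ji_n}
  = x_{i_1\cdots i_{n-1}\pi_n(j)i_{n+1}\cdots i_N},
\]
so multiplication by $P^{(n)}$ in mode~$n$ applies $\pi_n$ to the $n^\text{th}$ index. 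Since products in distinct modes commute, applying all $N$ of these products yields
\[
\Y = \X\times_1 P^{(1)}\times_2\cdots\times_N P^{(N)}.
\]

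First I would establish $r(\Y)\leq r(\X)$. Introducing the factors $P^{(1)},\dots,P^{(N)}$ one mode at a time, each mode product can only decrease the rank by Axiom~(TR6), so $r(\Y)\leq r(\X)$.

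For the reverse inequality I would use that each $P^{(n)}$ is invertible, its inverse being the permutation matrix associated with $\pi_n^{-1}$. Using the identity $\X\times_n\B\times_n\A = \X\times_n(\A\B)$ together with the commutativity of products in distinct modes,
\[
\Y\times_1(P^{(1)})^{-1}\times_2\cdots\times_N(P^{(N)})^{-1}
  = \X\times_1\bigl((P^{(1)})^{-1}P^{(1)}\bigr)\times_2\cdots
    \times_N\bigl((P^{(N)})^{-1}P^{(N)}\bigr)
  = \X,
\]
since $(P^{(n)})^{-1}P^{(n)} = \mathbf I_{I_n}$ for each $n$. Applying Axiom~(TR6) once more, this time starting from $\Y$, gives $r(\X)\leq r(\Y)$, and combining the two inequalities yields $r(\X)=r(\Y)$. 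There is no substantial obstacle here; the only point requiring care is orienting the permutation matrix $P^{(n)}$ so that its mode product reproduces $\pi_n$ in the intended direction, and then checking the routine cancellation that recovers $\X$.
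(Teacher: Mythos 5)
Your proof is correct and takes essentially the same route as the paper: both express $\Y$ as the multi-mode product of $\X$ with permutation matrices $\mathbf P^{(1)},\dots,\mathbf P^{(N)}$ and apply Axiom~(TR6) in both directions via invertibility. The only difference is that you spell out the orientation of $\mathbf P^{(n)}$ and the cancellation $\Y\times_1(\mathbf P^{(1)})^{-1}\times_2\cdots\times_N(\mathbf P^{(N)})^{-1}=\X$, which the paper leaves as an easy consequence.
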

\begin{proof}
For $1\leq n\leq N$ let $\mathbf P^{(n)}\in\R^{I_n\times I_n}$ be the
permutation matrix corresponding to the permutation $\pi_n$. Then
$\Y = \llbracket\X;\mathbf P^{(1)},\dots,\mathbf P^{(N)}\rrbracket$, so
Axiom~(TR6) implies that $r(\Y)\leq r(\X)$. But each matrix $\mathbf P^{(n)}$
is invertible, so it follows easily that $r(\Y) = r(\X)$.
\end{proof}

It is interesting to consider the axioms for tensor rank functions in
the context of the higher-order singular value decomposition (HOSVD) developed
by De Lathauwer \etal~\cite[Theorem~2]{DeLathauwer00}. Their work shows that
any tensor $\X\in\R^{I_1\times\cdots\times I_N}$ can be written as
\[ \X = \llbracket\Y;\A^{(1)},\dots,\A^{(N)}\rrbracket, \]
where $\A^{(n)}\in\R^{I_n\times I_n}$ is an orthogonal matrix for $1\leq n\leq N$
and $\Y\in\R^{I_1\times\cdots\times I_N}$ is a {\em core tensor\/} satisfying
certain orthogonality and ordering properties. Because each $\A^{(n)}$ is
invertible, Axiom~(TR6) implies that $r(\X) = r(\Y)$ for any tensor rank
function $r$. Thus Axiom~(TR6) implies that tensor rank functions are constant
on all tensors having an HOSVD with the same core. Axiom~(TR2) specifies the
value of the rank when the core tensor is the identity, and Axiom~(TR3)
specifies its value on tensors that can be identified with matrices. But the
HOSVD allows for a wide variety of core tensors, so these conditions are not
actually very restrictive. It is quite possible that interesting subclasses of
tensor rank functions arise by restricting the values of the functions on
specific types of core tensors.

\bibliography{tensors}
\bibliographystyle{amsplain}

\end{document}